\documentclass{amsart}

\usepackage[english]{babel}
\usepackage[utf8x]{inputenc}
\usepackage{amsmath}
\usepackage{graphicx}
\usepackage{enumerate}
\usepackage{amsmath}
\usepackage{mathrsfs}
\usepackage{mathtools}
\usepackage[backend=biber,url=true]{biblatex}
\renewbibmacro{in:}{%
  \ifentrytype{article}{}{\printtext{\bibstring{in}\intitlepunct}}}
\usepackage{csquotes}
\usepackage{bbm}
\usepackage{float}
\usepackage{hyperref}
\usepackage{subfig}
\usepackage{caption}
\usepackage{subcaption}
\usepackage{amssymb} 
\usepackage{tikz-cd}
\usepackage{color}

\usepackage[normalem]{ulem}
\usepackage[colorinlistoftodos]{todonotes}
\usepackage{enumitem}

\usepackage{tikz}
\usetikzlibrary{trees}

\newtheorem{theorem}            {Theorem}[section]
\newtheorem{proposition}        [theorem]{Proposition}

\newtheorem{lemma}[theorem]{Lemma}
\newtheorem{definition}[theorem]{Definition}
\newtheorem{remark}{Remark}
\newtheorem{cor}[theorem]{Corollary}

\newcommand{\bbx}{\mathbbm{x}}
\newcommand{\fax}{\langle \bbx \rangle}

\title{Limiting eigenvalue distribution and entropy of multi-Toeplitz matrices}
\author{Michael T. Jury, Arya Gayathri Memana}
\thanks{MJ partially supported by National Science Foundation grant DMS-2154494. A portion of these results appeared in AM's PhD thesis, given at the University of Florida, August 2026.}
\address{Department of Mathematics, University of Florida, Gainesville, FL 32611-8105}
\email{mjury@ufl.edu}
\email{aryagayathmemana@ufl.edu}

\begin{document}

\begin{abstract}
   We state and prove a version of Szeg\H{o}'s first limit theorem for multi-Toeplitz operators acting in the full Fock space in $d\geq 2$ letters. In particular we show that the eigenvalue distributions of truncated multi-Toeplitz operators converge to a limiting distribution. We compute this limiting distribution and its associated entropy in some special cases. Even in the simplest examples the limiting distribution is quite different from the classical case (corresponding to $d=1$); in these examples we obtain a purely atomic measure, which admits a probabilistic interpretation in connection with a percolation process associated to a simple tridiagonal random matrix model. 
\end{abstract}
\maketitle
\section{Introduction}
Szegő's well-known first limit theorem (\cite{MR1544404}) about Toeplitz operators states the following : Consider a bounded positive Toeplitz operator $T_{\phi}$ on the Hardy space with the continuous symbol $\phi: \mathbb{T} \rightarrow \mathbb{R}$. Denote by $\lambda_1, \lambda_2,..\lambda_N$, the $N$ eigenvalues of the $N \times N$ truncation $T_N$ of $T$. Then, for a continuous function $f: \mathbb{R} \rightarrow \mathbb{R}$,
\begin{align}\label{eq11}
    \lim_{N \rightarrow \infty} \frac{1}{N} \sum_{i=1}^{N} f(\lambda_i) = \lim_{N \rightarrow \infty} \frac{1}{N} Tr(f(T_N)) =  \int_{\mathbb{T}} f(\phi) dm.
\end{align}
where $m$ is the normalized Lebesgue measure on $\mathbb{T}$ and $Tr$ is the usual trace of a matrix.\\

Let $P_N$ be the projection onto the linear span of $\{e^{ik \theta}: k=0,1,\cdots,N\}$ on the Hardy space. Then corresponding to each of the operators $P_N TP_N$ we can define a Borel probability measure $\mu_A^N$ on $\mathbb{R}$ as:
\begin{align} 
\mu_T^N(B) = \frac{\nu_T^N(B)}{rank (P_N)} \ \ \ (B \subseteq \mathbb{R}, B \ \text{Borel})
\end{align}
where $\nu_T^N$ is the number of eigenvalues of $P_N T P_N$ in B, counting multiplicities.\\

The spectrum of a Toeplitz operator is the interval $[a,b]$ where $a$ and $b$ are the min and max values of its symbol $\phi$ respectively. In particular, the spectrum of $P_N T P_N$ which is the support of $\mu_T^N$, is contained in $[a,b]$. Equation (\ref{eq11}) can now be read as:

\begin{equation}\label{eqn:szego-limit}
    \lim_{N \rightarrow \infty} \int_{a}^{b} f(x) d\mu_T^N(x) = \int_{a}^{b} f(x) d (\phi_* m)(x)
\end{equation}
where $d(\phi_* m)$ is the push-forward of the Lebesgue measure $m$ under the map $\phi$. In this case, the support of $\mu_*$ is equal to the spectrum of the (self-adjoint) operator $T$. Thus, Szegő’s theorem asserts that the empirical eigenvalue distributions of finite-dimensional truncations of a positive Toeplitz operator converge, in the limit, to a measure determined by its symbol in a natural and explicit way.\\

The Fock space, which can be regarded as a noncommutative analogue of the classical Hardy space, provides a natural setting for studying noncommutative Toeplitz operators. Precise definitions and a review of the relevant results are given in the next section. Noncommutative extensions of classical Toeplitz operators and the Szeg\H{o} limit theorem have been studied by Popescu \cite{popescu-entropy-2001}, and more recently (in connection with noncommutative orthogonal polynomials) by Gauntlett and Kimsey \cite{kimsey-gauntlett}. Both results rely on a free version of the Schur parameter theory developed by Constaninescu and Johnson in \cite{constaninescu-1,constantinescu-2}.

In this paper we have two main goals: first, we give a direct and self-contained proof of the existence of the limiting measure for truncated multi-Toeplitz operators in Theorem~\ref{ex}. This result will be a multi-Toeplitz analog of Szeg\H{o}'s theorem stated above. (This result could be deduced from results in \cite{popescu-entropy-2001}, though it is not quite stated explicitly there. See the discussion in the next section for details.) Second, we turn to the problem of computing this limiting measure in some special cases. One hopes to find a formula something like (\ref{eqn:szego-limit}) expressing the limiting measure in terms of some kind of "symbol" of the multi-Toeplitz operator. Surprisingly, the results differ dramatically from the one-variable case. For example, even for some very simple multi-Toeplitz operators we find that the limiting measure is purely atomic (Theorem~\ref{thm:special-case}, Theorem~\ref{thm:atomic-general}), in contrast to the classical theorem, where the limiting distribution is absolutely continuous with respect to Lebesgue measure on the line. We also use our results to compute some explicit values of the entropy of a multi-Toeplitz kernel (see Definition~\ref{def:entropy}). In the simplest examples we consider, we are able to give a heuristic explanation for the appearance of these atomic measures; it turns out that in these examples the multi-Toeplitz matrices are quite sparse, and it is the sparsity pattern that controls the eigenvalue behavior. However a full picture of the limiting measure for general multi-Toeplitz operators is still lacking.

\section{Preliminaries}
 \begin{definition}
     The Fock space $\mathcal F_d$ on $d$ generators is the Hilbert space direct sum of the tensor powers of $\mathbb C^d$: put $\mathcal H_0=\mathbb C$ and $\mathcal H_n=(\mathbb C^d)^{\otimes n}$ for $n\geq 1$; then 
     \[
     \mathcal F_d= \bigoplus_{n=0}^\infty \mathcal H_n. 
     \]
 \end{definition}
  Fix the standard basis $\{e_1, \dots, e_d\}$ of $\mathbb C^d$.  For a word $w=i_1i_2\cdots i_n$ in the letters $\{1, \dots, d\}$, we let 
 \[
 \xi_w = e_{i_1}\otimes e_{i_2}\otimes \cdots \otimes e_{i_n}\in\mathcal H_n.
 \]
 The $\xi_w$ form an orthonormal basis of $\mathcal F^d$. 
 \begin{definition}
   The left d-shift is defined as the tuple of operators on $L= (L_1,\cdots,L_d)$ where each $L_i$ on $\mathcal{F}_d$ is defined as:
     \begin{align*}
         L_i \xi = e_i \otimes \xi
     \end{align*}
for $\xi\in\mathcal H_n$. Similarly the right d-shift $R=(R_1, \dots, R_d)$ is defined by
\[
R_i\xi =\xi\otimes e_i
\]
 \end{definition}
The $L_i$ form a system of isometries with orthogonal ranges, that is, $L_j^*l_i =\delta_{ij}I$. 

\begin{definition}
An operator $T$ is said to be {$R-$}{multi-Toeplitz} if $R_i^* T R_j = \delta_{ij}T$ and {$L-$}{multi-Toeplitz} if $L_i^* T L_j = \delta_{ij}T$ .
\end{definition}

 Note that $L_w$ and $L_w^*$ are $R-$multi-Toeplitz for every word $w$. And so in general any polynomial of the form,
 
 \begin{align*}
     p(L) + q(L)^*
 \end{align*}
is $R-$multi-Toeplitz. Here $p$ and $q$ are polynomials in $d$ free indeterminates $x_1,\cdots, x_d$.\\

From here on we say multi-Toeplitz to mean $R-$multi-Toeplitz, but all of our results have obvious analogs for $L-$multi-Toeplitz operators. In this paper we focus on positive multi-Toeplitz operators of the form,

 \begin{align}\label{eqmt}
     T = q_0+ \sum_{|w|\leq M} q_w L_w + \sum_{|w|\leq M} \overline{q_w} L_w^*
 \end{align}
The matrix representation of the above operator with respect to the Fock space basis $\{\xi_w\}$ has the form:
\begin{equation}\label{eqn:big-toeplitz}
T =\begin{bmatrix}
        q_0 & q_{-1}  & \cdots & q_{-n} & 0 & \cdots\\
        q_{1} & q_0\otimes I_d &  \cdots & q_{-(n-1)} \otimes I_d  & q_{-n} \otimes I_d & \cdots\\
         q_{2} & q_{1} \otimes I_d & \cdots &  q_{-(n-2)} \otimes I_d \otimes I_d  & q_{-(n-1)} \otimes I_d \otimes I_d  & \cdots\\
        \vdots & \vdots &  \ddots & \vdots  & \vdots  & \ddots\\
         q_{n} & q_{n-1} \otimes I_d &  \cdots &  q_0 \otimes I_d \otimes \cdots\otimes I_d  & q_{-1} \otimes I_d^{\otimes n}  & \ddots\\
         0 & q_{n} \otimes I_d & \cdots &  q_1 \otimes I_d^{\otimes n}  & q_{0} \otimes I_d^{\otimes (n+1)} & \ddots\\
         0 & 0 &  \cdots &  q_2 \otimes I_d^{\otimes n}  & q_{1} \otimes I_d^{\otimes (n+1)} & \ddots\\
         \vdots & \vdots & \vdots & \ddots & \ddots  &\ddots \\

      \end{bmatrix}.\end{equation}

      \vspace{0.4cm}

where \[q_k\::=\: col(q_w)_{w\in\fax ;|w|=k} \,\mbox{    and    }\, q_{-k}\::=\: row(q_w^*)_{w\in\fax ;|w|=k},\]

and 
\[q_1 \otimes I_d:=\begin{bmatrix}
             q_1 & \cdots & 0\\
             \vdots & \ddots & \vdots\\
             0 & \cdots & q_1\\
             \vdots & \ddots & \vdots\\
             \vdots & \ddots & \vdots\\
             q_d & \cdots & 0\\
             \vdots & \ddots & \vdots\\
             0 & \cdots & q_d\\
 \end{bmatrix}
 \]
(See for example \cite{MR4577945}). We use the term \textit{symbol} to mean the noncommutative polynomial associated with the multi-Toeplitz operator. So in $(\ref{eqmt})$, $T$ has the polynomial symbol $q(x)= q_0 + \sum_{0 \leq |w| \leq N} q_w \xi_w + \sum_{0 \leq |w| \leq N} \bar{q_w} x^{w^*} $. (In the classical case this corresponds to the usual notion of the symbol of a Toeplitz operator, when that symbol is a trigonometric polynomial. Note that we are considering only the polynomial case, and do not define the "symbol" of a multi-Toeplitz operator in general.)
  
With an eye towards a Szeg\H{o}-style limit theorem, the natural choice of projections here is the following: for $N \in \mathbb{N}$, we let $P_N$ denote the projection of $\mathcal{F}_d$ onto words of length at most $N$. i.e, projection onto the subspace spanned by the monomials $\{\xi_w: |w| \leq N\}$. These projections have finite rank which can be computed easily as $rank(P_N)=:d_N= 1+d+d^2+ \cdots d^N$.  Denote by $T_N=P_NTP_N$ the truncations of  the multi-Toeplitz operator at size $d_N$. Our first theorem will say that the eigenvalue distributions, 
 \begin{align*}
     \mu_{T_N} : = \frac{1}{d_N} (\delta_{\lambda_1} + \cdots+ \delta_{\lambda_{d_N}}).
 \end{align*}
 where $\{\lambda_i\}_1^{d_N}$ are the $d_N$ eigenvalues of $T_N$, have a weak  limit $\mu_T.$ 
 
 Popescu in \cite{popescu-entropy-2001}, introduces the notion of entropy for positive-definite multi-Toeplitz kernels on the Fock space $\mathcal F_d$. 

A \textit{positive-definite kernel} on $\mathcal{F}_d$ is a map $K : \mathcal{F}_d \times \mathcal{F}_d \rightarrow B(H) $ with the property that
\[
\sum_{i,j=1}^k \langle K(\sigma_i, \sigma_j) h_j, h_i \rangle \geq 0
\]
for all choices of vectors $h_1, h_2, \cdots, h_k \in H$, words $\sigma_1, \sigma_2, \cdots, \sigma_k \in \mathcal{F}_d$ and $k \in \mathbb{N}$. A kernel $K$ on $\mathcal{F}_d$ is called multi-Toeplitz if $K(e,e) = I_{H}$ ($e$ is the neutral element of $\mathcal{F}_d$) and
\[
K(\sigma, w)= \begin{cases} 
      K(\alpha, e); & \text{ if $\sigma = w \alpha$ for some $\alpha  \in \mathcal{F}_d$} \\
     K(e, \alpha)& \text{if $w= \sigma \alpha $ for some $\alpha \in \mathcal F_d$} \\
      0 & \text{otherwise}
   \end{cases}
\]
It can be verified that the positive multi-Toeplitz operators on $\mathcal{F}_d$ correspond to positive kernels on $\mathcal{F}_d$ via,
\[
K_T( \sigma, w) : = \langle T \sigma, w \rangle \quad \sigma, w \in \mathcal F_d
\]
The entropy $E(K)$ of $K$ is defined by:
\begin{definition}\label{def:entropy}\cite[Equation 2.12]{popescu-entropy-2001}
    \[\exp{E(K)}: = \lim_{N \rightarrow \infty} \left(  \frac {det M_N} { det M_{N-1}} \right)^{1/d^N}\]
    where $M_N$ is the truncation of the multi-Toeplitz kernel matrix at block size $N$.
\end{definition}
Then we have the following result \cite[Corollary 2.4]{popescu-entropy-2001}:
\begin{proposition} \label{popescu}
    Assume $K$ is a positive definite multi-Toeplitz kernel and $det M_N \neq 0$ for any $N=1,2,3,\cdots$. Let $\lambda_1, \lambda_2, \cdots, \lambda_q$ be the eigenvalues of $M_N$. Then,
    \begin{equation}\label{eqn:entropy}
        \lim_{N \rightarrow \infty} \frac{\log \lambda_1 + \log \lambda_2 + \cdots + \log \lambda_q}{d_N} = E(K)
    \end{equation}
    where  $E(K)$ is the entropy of $K$ and $d_N = 1+ d + d^2 + \cdots+ d^N$.
\end{proposition}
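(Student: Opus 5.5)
The sum $\log\lambda_1+\cdots+\log\lambda_q$ equals $\log\det M_N$. Write $D_N=\det M_N$, with $D_{-1}:=1$ and $D_0=\det K(e,e)=1$, and set
\[
a_N=\frac{1}{d^N}\log\frac{D_N}{D_{N-1}}.
\]
By Definition~\ref{def:entropy}, $a_N\to E(K)$ (possibly $E(K)=-\infty$, which we treat separately). Telescoping gives
\[
\log D_N=\sum_{k=0}^N \log\frac{D_k}{D_{k-1}}=\sum_{k=0}^N d^k a_k ,
\]
so
\[
\frac{\log D_N}{d_N}=\sum_{k=0}^N \frac{d^k}{d_N}\,a_k .
\]
This is a weighted average of the $a_k$, and the whole statement reduces to a Stolz--Ces\`aro / Toeplitz-summability argument.

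The weights $w_{N,k}=d^k/d_N$ are nonnegative and satisfy $\sum_{k=0}^N w_{N,k}=1$. For each fixed $k$ we have $w_{N,k}\to 0$ as $N\to\infty$, since $d_N\geq d^N\to\infty$ when $d\ge 2$. By the Silverman--Toeplitz theorem, the weighted averages of a convergent sequence converge to the same limit. Concretely, given $\epsilon>0$, choose $K_0$ with $|a_k-E(K)|<\epsilon$ for $k\ge K_0$. Then
\[
\left|\frac{\log D_N}{d_N}-E(K)\right|\le \frac{1}{d_N}\sum_{k<K_0} d^k|a_k-E(K)|+\epsilon .
\]
The first term is a fixed finite quantity divided by $d_N\to\infty$, so it tends to $0$. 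Hence $\limsup\le\epsilon$ for every $\epsilon>0$.

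The infinite case needs a little more care. First, positivity of the kernel together with $\det M_N\ne 0$ gives $M_N>0$. Then $D_N/D_{N-1}$ is the determinant of a Schur complement, namely the bottom $d^N\times d^N$ block minus a positive correction. Hence $D_N/D_{N-1}\le \det(\text{diagonal block})=1$, since the diagonal blocks are $I_{d^N}$ because $K(e,e)=I$. So $a_k\le 0$, and the limit lies in $[-\infty,0]$. If $E(K)=-\infty$, the same weighted-average argument with a threshold $-R$ in place of $\epsilon$ shows $\log D_N/d_N\to-\infty$. The only genuinely delicate point is making sure Definition~\ref{def:entropy}'s limit exists in the first place. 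I would take this as given in the definition, as Popescu does. If it is not, then this Schur-complement monotonicity (the complements decrease as one conditions on more words) is what I would use to establish it.
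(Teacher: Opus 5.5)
Your argument is correct, but it does not follow the paper's route. The paper gives no proof of Proposition~\ref{popescu}; it imports it as Corollary~2.4 of \cite{popescu-entropy-2001}, where it sits inside Popescu's entropy theory built on the free Schur-parameter description of positive multi-Toeplitz kernels.

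You instead show the proposition is a formal consequence of Definition~\ref{def:entropy}. You write $\log\det M_N/d_N$ as the average of the $a_k$ with weights $d^k/d_N$, and Toeplitz summability finishes the job. This is elementary and independent of any structure theory for $K$. The multi-Toeplitz property enters only through your Schur-complement bound $a_k\le 0$, which cleanly excludes $+\infty$.

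What the citation supplies that your argument does not is the existence of the limit defining $E(K)$. You take it as part of the definition, exactly as the paper does, so this is not a gap for the statement as posed.

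If you want to close it yourself, use the bordering structure from the proof of Theorem~\ref{ex}. List the empty word first and the remaining words grouped by first letter. Then $M_N$ is $M_{N-1}\otimes I_d$ bordered by the empty-word row and column, so $D_N=D_{N-1}^{\,d}\gamma_N$, where $\gamma_N\in(0,1]$ is the Schur complement of the $(e,e)$ entry. Since $\gamma_N$ is a squared distance to a growing subspace, it is nonincreasing. Hence $\log D_N/d^N=\sum_{k=1}^N d^{-k}\log\gamma_k$ decreases to some $\Gamma\in[-\infty,0]$, and both $a_N$ and $\log D_N/d_N$ tend to $(1-\tfrac1d)\Gamma$. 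This gives existence and the proposition at once, with no summability step.

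Your own suggested monotonicity also works, but only through the same self-similarity. "Conditioning on more words" alone does not give it, because the $d^N\times d^N$ complements $S_N$ live on index sets of different sizes. What one actually gets is that, after permuting indices, $S_{N+1}$ equals the block-diagonal matrix with $d$ copies of $S_N$ minus a positive rank-one matrix. From this, $(\det S_{N+1})^{1/d^{N+1}}\le(\det S_N)^{1/d^N}$.
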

It is possible to deduce from this statement, along the lines of Szeg\H{o}'s original theorem, that if one makes the additional assumption that the operator determined by $K$ is bounded, then the $M_N$ have a limiting eigenvalue distribution, although this is not carried out explicitly in \cite{popescu-entropy-2001}. When $K$ comes from a bounded Toeplitz operator $T$, the $M_N$ in Popescu's notation are our truncations $T_N=P_NTP_N$. We now give a self-contained proof of the existence of the limiting eigenvalue distribution for truncations of bounded, self-adjoint multi-Toeplitz operators. 
 
\begin{theorem}\label{ex}
 Let $T$ be a self-adjoint, bounded  multi-Toeplitz operator in $d\geq 2$ variables. For each $T_N= P_N TP_N$, let $\lambda_1^{(N)}, \lambda_2^{(N)}, \cdots, \lambda_{d_N}^{(N)}$, be its eigenvalues. There exists a unique probability measure $\mu_T$ on $\mathbb{R}$, such that for every continuous function $f: \mathbb{R} \rightarrow \mathbb{C}$, we have,
        
        \begin{align}\label{eq15}
            \lim_{N \rightarrow \infty} \frac{1}{d_N} \sum_{i=1}^{d_N} f(\lambda_i^{(N)}) = \int_{\mathbb{R}} f\,  d\mu_T
        \end{align}
        Moreover, if $[a,b]$ is the smallest interval containing the spectrum of $T$, then $\mu_T$ is supported in $[a,b]$.
    \end{theorem}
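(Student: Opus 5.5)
The plan is the method of moments: show that for every $k\geq 0$ the normalized traces $\frac{1}{d_N}\operatorname{Tr}(T_N^k)$ converge, and then conclude by Weierstrass approximation. Since $\|T_N\|\le\|T\|$ and $T_N$ is a compression of $T$, every eigenvalue of $T_N$ lies in the numerical range of $T$. For self-adjoint $T$ this is contained in $[a,b]$, the convex hull of $\sigma(T)$. Hence all the measures $\mu_{T_N}$ are supported in the fixed compact interval $[a,b]$. On $[a,b]$ polynomials are dense in $C[a,b]$, so convergence of moments for every $k$ gives weak convergence to a probability measure $\mu_T$ supported in $[a,b]$. The limiting moment functional is positive and normalized, so Riesz representation produces $\mu_T$, and uniqueness is automatic because a measure on a compact interval is determined by its moments. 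This also gives the support statement.

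The core step is the moment convergence. Write $Q_n$ for the projection onto $\mathcal H_n$, so that
\[
\tfrac{1}{d_N}\operatorname{Tr}(T_N^k)=\tfrac{1}{d_N}\sum_{n=0}^N \operatorname{Tr}(Q_n T_N^k Q_n).
\]
Expanding $(P_NTP_N)^k$ in the basis, each diagonal entry $\langle T_N^k\xi_w,\xi_w\rangle$ is a sum over paths $w=w_0\to w_1\to\cdots\to w_k=w$ with all $|w_j|\le N$, weighted by matrix entries of $T$.

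I then use the $R$-multi-Toeplitz relation $R_i^*TR_j=\delta_{ij}T$. Its consequence is that $\langle T\xi_{u v},\xi_{u' v}\rangle=\langle T\xi_u,\xi_{u'}\rangle$, and entries between words with different "tails" vanish appropriately. Define $\tau_m(k)$ as the contribution from "depth" $m$, meaning paths from a word $w$ whose excursion reaches at most $m$ levels above the start. More precisely, I claim that $\langle T_N^k\xi_w,\xi_w\rangle$ depends only on $N-|w|$, and stabilizes once $N-|w|$ is large. For fixed words the entry $\langle T^k\xi_w,\xi_w\rangle$ is independent of $w$, since $R_w^*T^kR_w=T^k$.

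The obstacle is that $T$ is merely bounded, not banded, so the truncation effect does not literally stabilize. I handle this in two stages. First I approximate $T$ in norm, or at least in a way controlling normalized traces, by multi-Toeplitz operators of polynomial type (finite band $M$); for example, truncate the "symbol" coefficients and use a Fejér-type averaging to keep self-adjointness and a norm bound. Since $|\frac1{d_N}\operatorname{Tr}(A^k-B^k)|\le k\max(\|A\|,\|B\|)^{k-1}\|A-B\|$, a uniform norm approximation suffices. For band-$M$ operators, the quantity $c_j:=\langle T_N^k\xi_w,\xi_w\rangle$ with $j=N-|w|$ depends only on $j$, using right-invariance: the subtree below $w$ at level $\le N$ is isomorphic to the truncated Fock space of depth $j$, plus shared prefixes. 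It also depends only on $j$ when $|w|\ge kM$, and equals the constant $\langle T^k 1,1\rangle$-type value once $j\ge kM$. Then
\[
\tfrac{1}{d_N}\operatorname{Tr}(T_N^k)=\sum_{j=0}^{N}\tfrac{d^{N-j}}{d_N}\,c_j+o(1),
\]
and since $d^{N-j}/d_N\to (1-1/d)d^{-j}$, dominated convergence gives the limit $\sum_j (1-d^{-1})d^{-j}c_j$. The geometric weighting, valid because $d\ge2$, means the boundary layer carries positive mass. This is exactly why the limit differs from the classical case.

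The main expected difficulty is the norm approximation of a general bounded multi-Toeplitz operator by banded ones. If the Fejér-type averaging is unavailable, I would instead argue directly with $\epsilon$-tails of the matrix entries in the Hilbert–Schmidt sense on each column, using $\|T\xi_w\|\le\|T\|$.
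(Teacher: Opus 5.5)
Your outer frame (support in $[a,b]$, reduction to moments, Riesz representation, uniqueness) matches the paper. However, both steps of your moment argument rest on false claims.

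\textbf{First gap: norm approximation.} Approximating a general bounded multi-Toeplitz $T$ \emph{in norm} by banded ones is impossible in general, not merely hard. Compression to $\overline{\mathrm{span}}\{\xi_{1^k}:k\ge0\}$ is a contraction. It sends banded multi-Toeplitz operators to classical Toeplitz matrices with trigonometric-polynomial symbols, and $\|T_\varphi\|=\|\varphi\|_\infty$. Hence any norm limit of banded operators compresses to some $T_\varphi$ with $\varphi$ continuous. But $T=f(L_1)+f(L_1)^*$, with $f$ a singular inner function, is bounded, self-adjoint and multi-Toeplitz, and it compresses to $T_{2\mathrm{Re}\,f}$, whose symbol is discontinuous.

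The Fej\'er means $\sigma_M(T)=\int F_M(\theta)\,U_\theta TU_\theta^*\,\frac{d\theta}{2\pi}$, with $U_\theta\xi_w=e^{i|w|\theta}\xi_w$, do exist. They are banded, self-adjoint and multi-Toeplitz with $\|\sigma_M(T)\|\le\|T\|$, but they converge only strongly.

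What rescues the step is the normalized Hilbert--Schmidt estimate you only gestured at. By $R_i^*TR_j=\delta_{ij}T$, the only nonzero entries of $T$ are $\langle T\xi_w,\xi_{yw}\rangle=t_y:=\langle T\xi_\emptyset,\xi_y\rangle$ and $\langle T\xi_{yw},\xi_w\rangle=\overline{t_y}$. So if $s_y$ are the coefficients of $\sigma_M(T)$ and $D=T-\sigma_M(T)$, every column satisfies $\|D\xi_w\|^2\le 2\sum_y|t_y-s_y|^2$. This tends to $0$ uniformly in $w$, since $\sum_y|t_y|^2=\|T\xi_\emptyset\|^2\le\|T\|^2$. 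Hence $\frac1{d_N}\|P_NDP_N\|_2^2\to0$ uniformly in $N$. Combined with $\frac1{d_N}|\mathrm{Tr}(A^k-B^k)|\le k\|T\|^{k-1}\bigl(\frac1{d_N}\|A-B\|_2^2\bigr)^{1/2}$, this closes the approximation step.

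\textbf{Second gap: the banded case.} Your claim that $\langle S_N^k\xi_w,\xi_w\rangle$ depends only on $j=N-|w|$ is false, and so is its justification $R_w^*T^kR_w=T^k$. The power $T^k$ is not multi-Toeplitz, because $R_wR_w^*\neq I$. For $T=L_1+L_1^*$ and $|w|<N$ one has $\langle T_N^2\xi_w,\xi_w\rangle=\|T\xi_w\|^2=1+\chi(w)$, where $\chi(w)=1$ if $w$ begins with the letter $1$ and $0$ otherwise. Already $w=1$ gives $R_1^*T^2R_1\ne T^2$.

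Closed walks from $\xi_w$ strip and re-attach prefixes of $w$. So the diagonal entry depends on the first $kM$ letters of $w$ as well as on $\min(j,kM)$. For $d\ge2$ this is not a negligible boundary effect: a fixed fraction $1/d$ of the words of every length begin with $1$.

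The repair is to let $c_j$ be the \emph{average} of the diagonal entries over $|w|=N-j$. For $N-j\ge kM$ the length-$kM$ prefixes are equidistributed, so $c_j$ is independent of $N$ and constant for $j\ge kM$. Your geometric sum then goes through with an $O(d_{kM}/d_N)$ error, and it even yields an explicit moment formula, which the paper does not give.

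\textbf{How the paper avoids both issues.} It uses one structural observation you did not exploit. The relation $R_i^*TR_j=\delta_{ij}T$ makes $T_{N+1}$ unitarily equivalent to a rank-two perturbation of $0\oplus(T_N\otimes I_d)$. Therefore $|\mathrm{Tr}\,T_{N+1}^k-d\,\mathrm{Tr}\,T_N^k|$ is bounded independently of $N$; the paper phrases this with Schatten $p$-norms after making $T$ positive. It follows that $\mathrm{Tr}(T_N^k)/d^N$ is Cauchy for every bounded $T$, with no banded approximation and no analysis of diagonal entries.
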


\begin{proof}
   By adding a suitable constant multiple of the identity to $T$, we may assume $T$ is positive.  We then observe that if $[a,b]$ is any interval which contains the spectrum of $T$, it will also contain the spectrum of $T_N$. It follows that for any continuous function $f$ on $[a,b]$, we have the estimate
   \begin{equation}\label{eqn:uniform-estimate}
   \frac{1}{d_N} \left|\sum_{j=1}^{d_N} f(\lambda_j^{(N)}) \right| \leq \sup_{x\in [a,b]} |f(x)|
   \end{equation}
It will therefore suffice to prove that the limit in the left-hand side of $\ref{eq15}$ exists for $f(x)=x^p, p=0,1,\cdots,$. Indeed, if this is the case, then the limit will exist for all polynomials by linearity, and then also for all continuous $f$ by (\ref{eqn:uniform-estimate}) and uniform approximation. In other words, we will have proved that the map sending $f$ to this limit defines a continuous linear functional on $C[a,b]$, and then we conclude $\mu_T$ exists by appeal to the Riesz-Markov-Kakutani theorem. (In particular this shows that if $[a,b]$ is the smallest interval containing the spectrum of $T$, then $\mu_T$ is supported in $[a,b]$. )

    So fix $p\geq1$ and put
    \begin{align*}
        \alpha_N = (\lambda_1^{(N)})^p + (\lambda_2^{(N)})^p + \cdots+ (\lambda_{d_N}^{(N)})^p.
    \end{align*}
It will suffice to prove that $\lim_{N\to \infty}  \left( \frac{\alpha_N}{d^N} \right)^{1/p}$ exists. To do this, observe first that $\alpha_N^{1/p}$ is nothing but the Schatten $p$-norm $\|T_N\|_p$. Let us compare $\|T_{N+1}\|_p$ with $\|T_N\|_p$. From the multi-Toeplitz structure (\ref{eqn:big-toeplitz}), we see that $T_{N+1}$ is obtained from $T_N$ by first replacing $T_N$ with $T_N \otimes I_d $, and then appending a new first row and column to this matrix, where the new upper left corner entry is $q_0$ and the new row and column are the portions of the first row and column of the full (infinite) matrix for $T$ up to words of length $N+1$ 
From the definition of Schatten norms we have $\|T_N \otimes I_d\|_p = d^{1/p} \|T_N\|_p$, and appending the new row and column can perturb the $p$-norm by only a bounded amount (independent of $N$), namely the bound is the $p$-norm of the matrix consisting of the appended row and column, and the rest $0$'s. In summary we have,
\begin{align*}
    \|T_{N+1}\|_p = d^{1/p} \|T_N\|_p + O(1).
\end{align*}

Dividing through by $d^{(N+1)/p}$, we obtain
\begin{align*}
    \left( \frac{\alpha_{N+1}}{d^{N+1}} \right)^{1/p} = \left( \frac{\alpha_N}{d^N}  \right)^{1/p} + O\left(\frac{1}{d^{(N+1)/p}} \right).
\end{align*}
Thus, (since $d\geq 2$)! the differences between successive terms are summable in $N$, and we conclude that $(\frac{\alpha_N}{d^N})^{1/p}$ is a Cauchy sequence, hence convergent.
\end{proof}

\begin{remark}
    Note that this proof \textit{does not} work when $d=1$. Indeed in that case rather than the geometric decay, we conclude only that the gaps between successive terms decay at least as fast as $1/N$, which of course isn't enough to guarantee convergence.the convergence of the roots of the determinants is not sufficient, all by itself, to get a distance formula. 
\end{remark}
As was already noted, when $K=K_T$ then determinants $\det M_N$ are exactly $\det T_N$. For each $N$ the quantity in the left hand side of (\ref{eqn:entropy}) is
\[
\int_{\mathbb R} \log x \, d\sigma_N
\]
where $\sigma_N$ is the empirical eigenvalue distribution of $T_N$. 
So in particular, for $T$ a multi-Toeplitz operator on $\mathcal F_d$, we have
\begin{cor}\label{cor:entropy-fromula}
Let $T$ be a positive definite multi-Toeplitz operator acting in $\mathcal F_d$ and let $K_T$ be the associated positive kernel. Then the Popescu entropy $E(K_T)$ is given by the formula
\[
E(K_T) = \int_{\mathbb R} \log x\, d\mu_T(x)
\]
where $\mu_T$ is the limiting spectral distribution guaranteed by Theorem~\ref{ex}. 
\end{cor}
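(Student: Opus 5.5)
The plan is to combine Proposition~\ref{popescu} with Theorem~\ref{ex}, applying the latter to the single test function $f(x)=\log x$. First, as noted just before the statement, $K_T(\sigma,w)=\langle T\sigma,w\rangle$, so Popescu's matrices $M_N$ are exactly our truncations $T_N=P_NTP_N$. Hence the eigenvalues $\lambda_1^{(N)},\dots,\lambda_{d_N}^{(N)}$ in (\ref{eqn:entropy}) are those of $T_N$, and the left-hand side of (\ref{eqn:entropy}) equals $\int_{\mathbb R}\log x\,d\sigma_N(x)$, where $\sigma_N=\mu_{T_N}$ is the empirical eigenvalue distribution. Proposition~\ref{popescu} says this sequence converges to $E(K_T)$. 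It therefore remains to show that $\int\log x\,d\sigma_N\to\int\log x\,d\mu_T$, and this is where Theorem~\ref{ex} enters.

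I will interpret ``positive definite'' as $T\geq aI$ for some $a>0$, i.e.\ $T$ is bounded below. Let $[a,b]$ be the smallest interval containing $\sigma(T)$, so $0<a\le b$.

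\begin{enumerate}
\item Each $T_N$ is a compression of $T$, so $aI\le T_N\le bI$. All eigenvalues of $T_N$ lie in $[a,b]$; in particular $\det T_N\neq 0$, so the hypothesis of Proposition~\ref{popescu} holds.
\item By Theorem~\ref{ex}, $\mu_T$ is supported in $[a,b]$.
\item Choose a continuous $f:\mathbb R\to\mathbb R$ agreeing with $\log x$ on $[a,b]$; for example, extend $\log$ by constants outside $[a,b]$.
\item Then $\int f\,d\sigma_N=\frac{1}{d_N}\sum_i\log\lambda_i^{(N)}$ and $\int f\,d\mu_T=\int\log x\,d\mu_T$. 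Applying (\ref{eq15}) to $f$ and comparing with (\ref{eqn:entropy}) gives $E(K_T)=\int\log x\,d\mu_T$.
\end{enumerate}

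The main obstacle is positivity without a uniform lower bound. If one only assumes $T\ge 0$ with every $\det T_N\ne 0$, then $\log$ is not continuous on the support, and Theorem~\ref{ex} cannot be applied directly. My first step in that case is a truncation: set $f_\varepsilon(x)=\log\max(x,\varepsilon)$, which is continuous on $[0,b]$. Since $\log x\le f_\varepsilon(x)$ for $x>0$, Theorem~\ref{ex} gives
\[
E(K_T)=\lim_N\int\log x\,d\sigma_N\le\int f_\varepsilon\,d\mu_T .
\]
Letting $\varepsilon\downarrow 0$, monotone convergence (applied to the decreasing family $f_\varepsilon$, which is bounded above on $[0,b]$) yields $E(K_T)\le\int\log x\,d\mu_T$, with the right side allowed to be $-\infty$.

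The reverse inequality requires controlling the contribution of the small eigenvalues of $T_N$, namely showing
\[
\frac{1}{d_N}\sum_{\lambda_i^{(N)}<\varepsilon}\bigl|\log\lambda_i^{(N)}\bigr|\to 0
\]
uniformly in $N$ as $\varepsilon\to0$. I would attempt this using the Cauchy interlacing and monotonicity of the ratios $\det T_N/\det T_{N-1}$ that underlie Popescu's definition. However, I expect that in this generality one should simply restrict to the bounded-below case handled in the steps above, which is the natural reading of ``positive definite'' here.
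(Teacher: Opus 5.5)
Your proposal is correct and follows the paper's own argument. You identify Popescu's $M_N$ with $T_N$, read the left side of (\ref{eqn:entropy}) as $\int\log x\,d\sigma_N$, and pass to the limit with Theorem~\ref{ex}; this works because positive definiteness, read as $T\ge aI$ with $a>0$ just as the paper does in its entropy section, puts every $\sigma_N$ and $\mu_T$ in $[a,b]$, where $\log$ is continuous. Your extra one-sided bound $E(K_T)\le\int\log x\,d\mu_T$ in the merely nonsingular case is a valid complement, consistent with the paper's remark that the formula is not immediate when $0$ lies in the support.
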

The proof of the above theorem relies heavily on the structure of the associated multi-Toeplitz matrices and guarantees the existence of a Szegő-type limiting measure. As described in the introduction, in the classical setting the limiting measure is straightforward to describe, as the push-forward of Lebesgue measure on the circle, under the symbol function. In contrast, identifying this limiting measure in our noncommutative setting turns out to be rather more delicate. Unlike in the classical and Drury–Arveson settings, neither the classical arguments nor the more general $C^*$-algebraic approach is available here, since the projections $P_N$'s
  fail to form a Følner sequence. (See \cite{memana-2026} for a discussion of these approaches and their application in the Drury-Arveson space and related settings.) In the next section we compute the limiting measure explicitly in some concrete cases, and observe a stark contrast with the one dimensional situation. 

\section{Examples of the limiting eigenvalue distribution}

For $h\geq 1$, let $X_h$ be the $h+1\times h+1$ matrix whose $(i,j)$ entry is $1$ if $|i-j|=1$ and $0$ otherwise (a Jordan block plus its adjoint). Let $\mu_h$ be the spectral measure of $X_h$. It is well known and straightforward to prove that 
\[
\mu_h =\sum_{j =1}^{h+1}\delta_{\alpha_{j,h}}
\]
where $\alpha_{j,h} =2\cos\frac{j\pi}{h+2}$. Note that $\mu_h$ is symmetric about the origin.

\begin{theorem}\label{thm:special-case}
    Let $T=L_1+L_1^*$ act in the full Fock space $\mathcal F_d$, $d\geq 2$, and let $\mu_T$ be the limiting eigenvalue distribution of the truncations $T_N$ as in Theorem~\ref{ex}. Then $\mu_T$ is a countable sum of atoms, given explicitly by
\begin{equation}\label{eqn:muT}
     \mu_T= \left(1-\frac1d\right)^2 \sum_{h=0}^\infty  \left(\frac1d\right)^h \mu_h. 
\end{equation}
\end{theorem}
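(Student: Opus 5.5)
The plan is to compute the truncations $T_N$ exactly: they turn out to be direct sums of the path matrices $X_h$. The empirical measure is then an explicit finite combination of the $\mu_h$, and we pass to the limit directly. Theorem~\ref{ex} is used only to identify this limit as $\mu_T$; it is also consistent with the explicit computation. Note that $\mu_h$ as defined is the unnormalized counting measure, with total mass $h+1$.

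\emph{Step 1: decomposition of $T_N$.} In the basis $\{\xi_w:|w|\le N\}$ we have $L_1\xi_w=\xi_{1w}$. Hence $T_N=P_N(L_1+L_1^*)P_N$ is the adjacency matrix of the graph on words of length at most $N$, with an edge $w\sim 1w$ whenever $|1w|\le N$. Every word factors uniquely as $1^j v$, where $v$ is either empty or begins with a letter different from $1$. So the connected components are the chains $v,1v,11v,\dots,1^{N-|v|}v$, and each chain is a path on $N-|v|+1$ vertices. On the span of a chain, $T_N$ acts as $X_{N-|v|}$. The empty word gives one copy of $X_N$. For $1\le k\le N$ there are $(d-1)d^{k-1}$ words $v$ of length $k$ whose first letter is not $1$, each giving a copy of $X_{N-k}$. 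Therefore, writing $h=N-k$,
\[
\mu_{T_N}=\frac{1}{d_N}\Big(\mu_N+\sum_{h=0}^{N-1}(d-1)d^{N-h-1}\mu_h\Big).
\]
As a check, the total mass is $\frac1{d_N}\big(N+1+\sum_{h=0}^{N-1}(d-1)d^{N-h-1}(h+1)\big)=1$, since this counts every word exactly once.

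\emph{Step 2: passage to the limit.} Fix a continuous $f$. All atoms lie in $[-2,2]$, so $|\int f\,d\mu_h|\le (h+1)\|f\|_\infty$, where the sup norm is taken on $[-2,2]$.
\begin{itemize}
\item The $\mu_N$ term contributes at most $(N+1)\|f\|_\infty/d_N\to 0$.
\item For fixed $h$, the coefficient satisfies $(d-1)d^{N-h-1}/d_N\to (d-1)^2d^{-h-1}\cdot d^{-1}=(1-\frac1d)^2d^{-h}$, using $d_N=(d^{N+1}-1)/(d-1)$.
\item Uniformly in $N$, each coefficient is bounded by $C d^{-h}$.
\end{itemize}
Since $\sum_h (h+1)d^{-h}<\infty$ for $d\ge2$, dominated convergence for series gives
\[
\int f\,d\mu_{T_N}\to\Big(1-\frac1d\Big)^2\sum_{h\ge0}d^{-h}\int f\,d\mu_h .
\]
By the uniqueness in Theorem~\ref{ex}, $\mu_T$ equals the measure in (\ref{eqn:muT}). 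Its total mass is $(1-\frac1d)^2\sum_h(h+1)d^{-h}=1$, consistent with $\mu_T$ being a probability measure. It is a countable sum of atoms at the points $2\cos\frac{j\pi}{h+2}$.

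The only real obstacle is the combinatorial bookkeeping in Step 1. We must verify that $L_1^*$ truncated acts as the reverse edge within the chain: $L_1^*\xi_{1u}=\xi_u$ and $L_1^*\xi_v=0$ when $v$ does not begin with $1$. We must also verify that truncation cuts each chain exactly at length $N$ and creates no edges between different chains. The remainder is routine summation.
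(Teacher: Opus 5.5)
Your proof is correct and follows essentially the same route as the paper. Both arguments decompose $T_N$ into path components $\{1^jv\}$ indexed by suffixes $v$ not beginning with $1$, count the components of each length, and pass to the limit; the paper does the last step in total variation norm, and you do it by dominated convergence against continuous test functions, which amounts to the same thing. Your bookkeeping is in fact slightly more careful than the paper's written proof. You keep the $h=0$ summands coming from the $(d-1)d^{N-1}$ isolated vertices, whereas the paper's displayed formula (\ref{eqn:esd-N}) and its final sum omit them, even though the statement (\ref{eqn:muT}) correctly includes them.
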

Before proving the theorem, we make a few comments on the structure of this limiting measure. First, as a sanity check, it can be verified easily that $\mu_T$ is a probability measure supported in $[-2,2]$. Indeed, note that each $\mu_h$ has total mass $h+1$. Then, for any $0<t<1$,
\[
(1-t)^2 \sum_{h=0}^\infty (h+1) t^h =1.
\]
The atoms $\{2 cos(\frac{\pi j}{h+2}): j= 1,\cdots, h+1\}_{h \geq 0}$ evidently all lie in  $[-2,2]$. This calculation also shows that the measure $\mu_T$ belongs to a continuous family of measures parameterized by $t$ for $0 <t < 1$. The measure $\mu_T$ corresponds to $t= 1/d$. As the parameter $t$ approaches $1$, the measures converge weak-* to the arcsine distribution. This agrees with the classical case of the Toeplitz operator $S+S^*=T_{z+\overline{z}}$, where the limiting eigenvalue distribution is the arcsine law. This behavior is illustrated in Figure~\ref{fig:cdfs}.
\begin{figure}[H]
\centering

\begin{minipage}{0.4\textwidth}
\centering
\includegraphics[width=\textwidth]{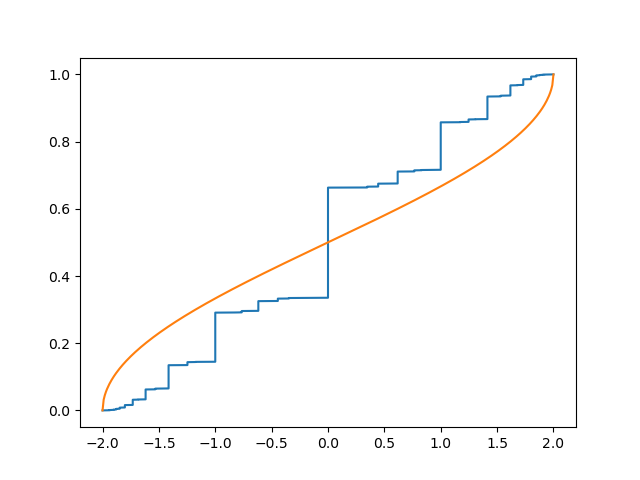}\\
{\small (i) $t=0.5$}
\end{minipage}
\hfill
\begin{minipage}{0.4\textwidth}
\centering
\includegraphics[width=\textwidth]{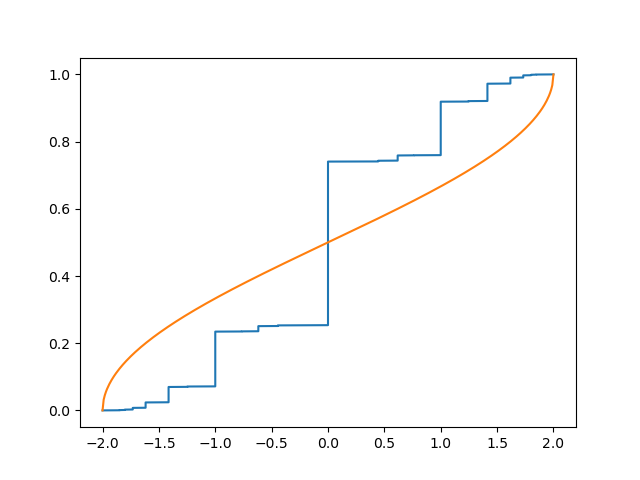}\\
{\small (ii) $t=0.333\dots$}
\end{minipage}

\medskip

\begin{minipage}{0.4\textwidth}
\centering
\includegraphics[width=\textwidth]{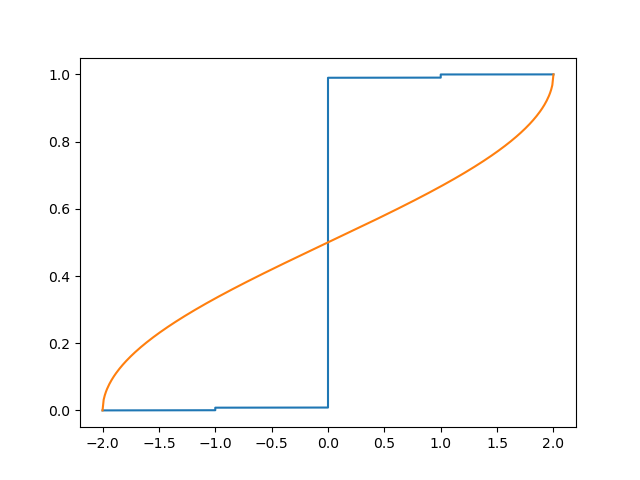}\\
{\small (iii) $t=0.01$}
\end{minipage}
\hfill
\begin{minipage}{0.4\textwidth}
\centering
\includegraphics[width=\textwidth]{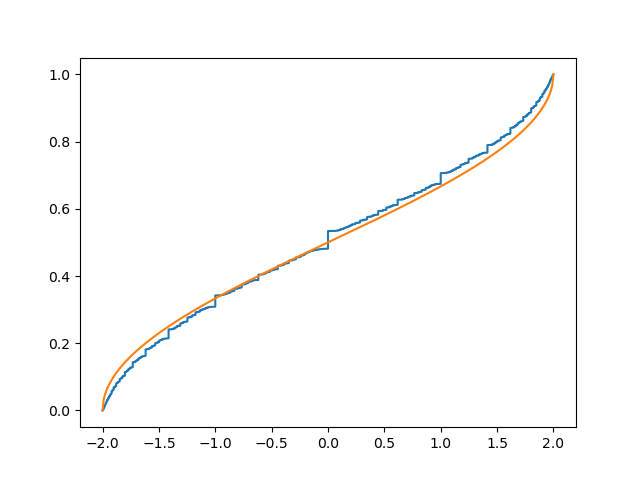}\\
{\small (iv) $t=0.9$}
\end{minipage}

\caption{Comparison of the arcsine law with the cumulative distribution functions for $\mu_T$, where $T = L_1 + L_1^*$ with different weights $t= \frac{1}{d}$, for $d=2,3, 100$ and the ``non-physical" value $d=10/9$}\label{fig:cdfs}
\end{figure}

\begin{proof}[Proof of Theorem~\ref{thm:special-case}]
Consider the truncated multi-Toeplitz operator $P_N(L_1+L_1^*)P_N$. Observe that for any words $v,w$ 
\[
\langle (L_1+L_1^* )\xi_v, \xi_w\rangle =0
\]
unless $v=1w$ or $w=1v$, in which case the inner product is $1$. Thus, the matrix of $T_N$ is the incidence matrix of a graph whose nodes are labeled by words of length at most $N$, with an edge between node $v$ and node $w$ if and only if either $v=1w$ or $w=1v$. Consequently, the resulting graph consists of $d^{N-1}$ isolated nodes (corresponding to the words $v$ of length $N$ which start with a letter different from $1$), and the remainder of the graph splits into connected components, each of which is a linear chain. (See Figure~\ref{fig:tree}.) The chain may be labeled by the longest word it contains, which always has length $N$ and starts with a $1$, and hence is a word of the form $1^hv$ where $1\leq h\leq N$, and $v$ is a word of length  $|v|=N-h$ which starts with a letter distinct from $1$. The length of such a chain is evidently $h$, and the incidence matrix thus decomposes into a $0$ summand, and a direct sum of copies of $X_h$, each copy of $X_h$ appearing as many times as the number of chains of length $h$ in the graph. From the above description, for $1\leq h<N$ this number is $(d-1) d^{N-h-1}$ (the number of words of length $N-h$ which do not start with $1$), and there is one chain of length $N$. We conclude that the spectral measure of $T_N$ is the sum
\begin{equation}\label{eqn:esd-N}
\mu_N+\sum_{h=1}^{N-1} (d-1)d^{N-h-1} \mu_h.
\end{equation}
Now the size of the matrix $T_N$ is 
\[
1+d+d^2+\cdots +d^N = \frac{d^{N+1}-1}{d-1}
\]
so the empirical spectral measure of $T_N$ is 
\begin{align*}
\sigma_N &= \frac{d-1}{d^{N+1}-1} \mu_N + \sum_{h=1}^N \frac{(d-1)^2d^{N-h-1}}{d^{N+1}-1} \mu_h\\
&= \frac{d-1}{d^{N+1}-1} \mu_N + \left(\frac{d-1}{d}\right)^2\frac{1}{1-d^{-(N+1)}} \sum_{h=1}^N \frac{1}{d^h} \mu_h
\end{align*}
Since the infinite series $\sum_{h=1}^\infty \frac{1}{d^h} \mu_h$ is summable in the total variation norm (note that the total mass of $\mu_h$ is $h+1$), it follows easily that as $N\to \infty$ the $\sigma_N$ converge in total variation norm to the measure
\[
\mu_T = \left(1-\frac1d\right)^2 \sum_{h=1}^\infty d^{-h}\mu_h. 
\]
\end{proof}

\begin{figure}[h]
\centering
\resizebox{\textwidth}{!}{
\begin{tikzpicture}[
  scale=0.8,
  level distance=1.8cm,
  level 1/.style={sibling distance=10cm},
  level 2/.style={sibling distance=5cm},
  level 3/.style={sibling distance=2.5cm},
  level 4/.style={sibling distance=1.1cm},
  every node/.style={circle,draw,minimum size=4mm, font=\scriptsize},
  edge from parent/.style={draw=none} 
]

\node (phi) {$\phi$}
  child {node(n1) {1}
    child {node(n11) {11}
      child {node(n111) {111}
        child {node(n1111) {1111}}
        child {node(n2111) {2111}}
      }
      child {node(n211) {211}
        child {node(n1211) {1211}}
        child {node(n2211) {2211}}
      }
    }
    child {node(n21) {21}
      child {node(n121) {121}
        child {node(n1121) {1121}}
        child {node(n2121) {2121}}
      }
      child {node(n221) {221}
        child {node(n1221) {1221}}
        child {node(n2221) {2221}}
      }
    }
  }
  child {node(n2) {2}
    child {node(n12) {12}
      child {node(n112) {112}
        child {node(n1112) {1112}}
        child {node(n2112) {2112}}
      }
      child {node(n212) {212}
        child {node(n1212) {1212}}
        child {node(n2212) {2212}}
      }
    }
    child {node(n22) {22}
      child {node(n122) {122}
        child {node(n1122) {1122}}
        child {node(n2122) {2122}}
      }
      child {node(n222) {222}
        child {node(n1222) {1222}}
        child {node(n2222) {2222}}
      }
    }
  };

\draw[red, very thick] (phi) -- (n1);
\draw[red, very thick] (n1) -- (n11);
\draw[red, very thick] (n11) -- (n111);
\draw[red, very thick] (n21) -- (n121);
\draw[red, very thick] (n2) -- (n12);
\draw[red, very thick] (n112) -- (n12);
\draw[red, very thick] (n22) -- (n122);
\draw[red, very thick] (n111) -- (n1111);
\draw[red, very thick] (n211) -- (n1211);
\draw[red, very thick] (n121) -- (n1121);
\draw[red, very thick] (n221) -- (n1221);
\draw[red, very thick] (n112) -- (n1112);
\draw[red, very thick] (n212) -- (n1212);
\draw[red, very thick] (n122) -- (n1122);
\draw[red, very thick] (n222) -- (n1222);

\end{tikzpicture}

}
\caption{Deterministic percolation process given by the truncated multi-Toeplitz matrices $T_N$ for $T=L_1+L_1^*$}\label{fig:tree}
\end{figure}

The argument can be easily adapted to prove the following more general result: 
\begin{theorem}\label{thm:atomic-general}
    Let $f:\mathbb T\to \mathbb C$ be a continuous analytic function with Fourier coefficients $c_j=\widehat{f}(j)$, $j\geq 0$.
    Consider the multi-Toeplitz operator $T=f(L_1)+f(L_1)^*$ acting in $\mathcal F_d$, $d\geq 2$. Let $T_N=P_NTP_N$ and for $h=0, 1, 2, \dots$ let $X_h^f$ be the $h+1\times h+1$ Toeplitz matrix
   \[ \begin{pmatrix}
        2c_0 & \overline{c_{1}} & \cdots & \overline{c_{h}}\\ c_1 & 2c_0 &\cdots & \overline{c_{h-1}} \\ \vdots & \ddots  & \ddots & \vdots\\ c_h & c_{h-1} & \cdots & 2c_0
    \end{pmatrix}.
   \] (These are truncations of the Toeplitz operator with symbol $f+\overline{f}$.) Let 
    \[
    \mu_h^f =\sum_{j=1}^h \delta_{\lambda_j^{(h)}}
    \]
    be the atomic measure with point masses at the eigenvalues $\lambda_j^{(h)}$ of $X_h^f$. Then the limiting eigenvalue distribution $\mu_T$ of the truncations $T_N$ is purely atomic, and is given by
    \[
    \mu_T= \left( 1-\frac1d\right)^2 \sum_{h=0}^\infty d^{-h} \mu_h^f.
    \]
    
\end{theorem}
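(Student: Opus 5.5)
The plan is to reduce to the argument for Theorem~\ref{thm:special-case}, using the fact that every operator in question leaves the same chain decomposition of the basis $\{\xi_w:|w|\le N\}$ invariant. Here $f(L_1)=\sum_j c_j L_1^j$; since $f$ is continuous we do not assume absolute summability of the $c_j$, so we interpret $f(L_1)$ via the disc algebra functional calculus for the isometry $L_1$. Its matrix entries are $\langle f(L_1)\xi_v,\xi_w\rangle = c_j$ if $w=1^jv$ and $0$ otherwise. Hence $\langle T\xi_v,\xi_w\rangle\neq 0$ only if $w=1^jv$ or $v=1^jw$ for some $j\ge 0$, and in that case it equals $c_j$ or $\overline{c_j}$ respectively (with the diagonal entry $2\,\mathrm{Re}\,c_0$, which we read as the $2c_0$ of the statement).

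First, I will partition the words of length $\le N$ into classes. Every word can be written uniquely as $1^k u$, where $u$ is empty or begins with a letter other than $1$. Two words are linked only if they have the same $u$. So the classes are $\{1^k u : 0\le k\le N-|u|\}$, and $P_N$ restricted to each class spans a reducing subspace for $T_N$. On the class of $u$, ordered by $k$, the compression of $T_N$ is exactly the Toeplitz matrix $X^f_{h}$ with $h=N-|u|$, because the $(k,k')$ entry depends only on $k-k'$. This is the analogue of the chain decomposition, except that the classes are now full Toeplitz blocks rather than tridiagonal ones. The count matches the earlier one. The empty $u$ gives one block of size $N+1$, that is $h=N$. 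For $|u|=m\ge 1$ there are $(d-1)d^{m-1}$ words $u$, each giving $h=N-m$. In particular $h=0$ occurs $(d-1)d^{N-1}$ times, and these $1\times1$ blocks $[2c_0]$ replace the isolated nodes of the earlier proof.

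Therefore the unnormalized spectral measure of $T_N$ is
\[
\mu^f_N+\sum_{h=0}^{N-1}(d-1)d^{N-h-1}\mu^f_h ,
\]
where $\mu^f_h$ denotes the counting measure of the $h+1$ eigenvalues of $X^f_h$, so its total mass is $h+1$. (The statement's index range $j=1,\dots,h$ should be read as $j=1,\dots,h+1$.) Dividing by $d_N=(d^{N+1}-1)/(d-1)$ gives the same computation as in the proof of Theorem~\ref{thm:special-case}. The leading term has mass $(N+1)(d-1)/(d^{N+1}-1)\to 0$. The coefficients $(d-1)^2d^{N-h-1}/(d^{N+1}-1)$ converge to $(1-1/d)^2d^{-h}$, and the series $\sum_h d^{-h}(h+1)$ converges. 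By dominated convergence in total variation, the normalized measures converge to $(1-\frac1d)^2\sum_{h\ge0}d^{-h}\mu^f_h$, which is purely atomic. Uniqueness in Theorem~\ref{ex} identifies this limit as $\mu_T$.

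The only point needing care is the first step: checking that the compression to each class is precisely $X^f_h$, including the orientation of $c_j$ versus $\overline{c_j}$ and the fact that truncation at length $N$ cuts the class $\{1^ku\}$ exactly at $k=N-|u|$. Everything after that is the bookkeeping already carried out in the proof of Theorem~\ref{thm:special-case}.
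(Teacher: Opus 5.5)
Your proposal is correct and follows the paper's proof: the paper also decomposes $T_N$ along the reducing subspaces spanned by $\xi_{1^k v}$ (with $v$ not beginning with $1$ and $k+|v|\le N$), identifies each compression with $X^f_{N-|v|}$, and reuses the block count and limit computation from Theorem~\ref{thm:special-case}. Your side remarks are accurate refinements of points the paper glosses over: the diagonal entry should be read as $2\,\mathrm{Re}\,c_0$, $\mu^f_h$ should have $h+1$ atoms, and $f(L_1)$ is defined through the disc-algebra functional calculus.
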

\begin{proof}
    Let $v$ be a word of length at most $N$ which does not start with $1$ (the empty word is allowed), and let $\mathcal H_v$ be the span of the basis vectors $\xi_w$ over all words of the form $1^kv$ where $k+|v|\leq N$. (Note that if $|v|=N$ then $\mathcal H_v$ is one-dimensional, spanned by $\xi_v$). It is easily checked that each of the subspaces $\mathcal H_v$ is reducing for $T_N$, and by examining the inner products
    \[
    \langle c_j L^j \xi_{1^kv}, \xi_{1^\ell v}\rangle
    \]
    one sees that the restriction of $T_N$ to this subspace is unitarily equivalent to $X_h^f$ where $h=N-|v|$. Thus, $T_N$ splits as a direct sum of copies of $X_h^f$, the number of copies equal to the number of words $v$ such that $\mathcal H_v$ has dimension $h+1$. But this is exactly the number of chains of length $h$ as counted in the previous proof. It follows that the spectral measure of $T_N$ is
    \[
    \sigma_N^f := \mu_N^f + \sum_{h=0}^{N-1} (d-1) d^{N-h-1}\mu_h^f.
    \]
    The remainder of the proof proceeds as before. 
\end{proof}

We can extend Theorem~\ref{thm:special-case} to replace $L_1$ with a homogeneous sum $\sum_{|w|=M} a_w L^w$; this is accomplished using the following two lemmas. 

\begin{lemma}\label{lem:one-letter}
    Let $a_1, \dots, a_d$ be scalars with $\sum_{j=1}^d |a_j|^2=1$. There is a unitary transformation $U:\mathcal F_d\to \mathcal F_d$ such that
    \[\sum_{j=1}^d a_j L_j = UL_1U^*\]
    and
    \[UP_N=P_NU\]
    for each $N$. 
\end{lemma}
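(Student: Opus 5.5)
The plan is to build $U$ as the second quantization of a single unitary on $\mathbb C^d$. Since $a=(a_1,\dots,a_d)$ is a unit vector, I extend it to an orthonormal basis of $\mathbb C^d$. This gives a unitary $u$ on $\mathbb C^d$ with $ue_1=\sum_j a_j e_j$, namely the matrix whose first column is $a$.

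Define $U$ on each summand $\mathcal H_n=(\mathbb C^d)^{\otimes n}$ by $U|_{\mathcal H_n}=u^{\otimes n}$, with $U|_{\mathcal H_0}=1$. Because $u^{\otimes n}$ is unitary on $\mathcal H_n$, the direct sum $U=\bigoplus_n u^{\otimes n}$ is a unitary on $\mathcal F_d$. It maps each $\mathcal H_n$ onto itself, so it commutes with the projection onto each $\mathcal H_n$. Since $P_N$ is the projection onto $\bigoplus_{n\le N}\mathcal H_n$, it follows that $UP_N=P_NU$ for every $N$.

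Next comes the intertwining relation. Write $L(x)\xi=x\otimes\xi$ for $x\in\mathbb C^d$, so that $\sum_j a_jL_j=L(ue_1)$. For $\xi\in\mathcal H_n$ we have $U^*\xi=(u^*)^{\otimes n}\xi\in\mathcal H_n$, and therefore
\[
UL_1U^*\xi=u^{\otimes(n+1)}\bigl(e_1\otimes (u^*)^{\otimes n}\xi\bigr)=ue_1\otimes u^{\otimes n}(u^*)^{\otimes n}\xi=ue_1\otimes\xi=\sum_j a_jL_j\xi .
\]
This holds on each $\mathcal H_n$. Both sides are bounded (each is an isometry), so the identity extends by linearity and continuity to all of $\mathcal F_d$.

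No step here is a real obstacle. The only points needing care are the bookkeeping of the tensor factor $e_1$ in the computation above, and the observation that the grading-preserving property of $U$ is exactly what yields commutation with $P_N$.
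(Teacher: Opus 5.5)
Your proposal is correct and follows essentially the same route as the paper: take a unitary on $\mathbb C^d$ sending $e_1$ to $\sum_j a_j e_j$, let $U$ be the direct sum of its tensor powers (which preserves each $\mathcal H_n$ and so commutes with every $P_N$), and check $UL_1U^*\xi=\bigl(\sum_j a_je_j\bigr)\otimes\xi$ on each $\mathcal H_n$. Your explicit extension from the homogeneous pieces to all of $\mathcal F_d$ by boundedness is a small step the paper leaves implicit.
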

\begin{proof}
   Let $\{e_1, \dots, e_d\}$ be the standard orthonormal basis for $\mathbb C^d$ and let $f_1=\sum_{j=1}^da_je_j$. Let $\{f_1, \dots, f_d\}$ be an orthonormal basis of $\mathbb C^d$ extending $\{f_1\}$, and let $V$ be any unitary map of $\mathbb C^d$ taking $e_1$ to $f_1$. The map $V$ then determines a unitary $V^{\otimes n}$ acting in $(\mathbb C^d)^{\otimes n}$, taking the direct sum of these gives a unitary $U$ acting in $\mathcal F_d$. By construction $U$ commutes with $P_N$.    Now for any vector $\xi_n\in(\mathbb C^d)^{\otimes n}$, we have
    \begin{align*}
  (UL_1U^*) \xi_n &=U(e_1\otimes (V^{\otimes n})^* \xi_n)\\
  &=(V\otimes (V^{\otimes n}))[e_1\otimes (V^{\otimes n})^*\xi_n] \\
  &= f_1\otimes \xi_n\\
  &=\left( \sum_{j=1}^da_j L_j\right)\xi_n.
    \end{align*}
    
\end{proof}
\begin{lemma}\label{lem:long-words} Fix an integer $M\geq 1$. Let $L_1, \dots, L_d$ and $\Lambda_1, \dots, \Lambda_\delta$ denote the shifts on $d$ and $\delta=d^M$ letters respectively. Let $P_N$ denote the projection in $\mathcal F_d$ onto the span of words of at most $N$ letters, and similarly $\Pi_{N^{\prime}}$ the projection in $\mathcal F_{\delta}$. Let $\{a_w\}$ be scalars indexed by the words $w$ of length $M$ in $d$ letters, and let $\alpha_j$ be the same scalars as the $a_w$, labeled using a fixed bijection of the set of words of length $M$ in $d$ letters, with the set $\{1,\dots, \delta\}$.

Let $T=\sum_{|w|=M}\left(a_wL^w+\overline{a_w}L^{w*}\right)$ and $\widetilde{T} = \sum_{j=1}^\delta\left(\alpha_j\Lambda^j+\overline{\alpha_j}\Lambda^{j*}\right)$. 

Then for each $N^\prime \geq 1$, putting $N =MN^\prime-1$, the operator $T_{N} = P_{N}TP_{N}$ is unitarily equivalent to a direct sum of $1+d+\cdots +d^{M-1}$ copies of $\widetilde{T}_{N^\prime} = \Pi_{N^\prime} \widetilde{T} \Pi_{N^\prime}$.  

\end{lemma}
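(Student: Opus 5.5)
The plan is to split the range of $P_N$ into orthogonal pieces, one for each possible ``tail'' of a word, and to identify each piece with a truncated Fock space on $\delta$ letters. Every word $w$ in $d$ letters has a unique factorization $w=uv$ with $|u|$ a multiple of $M$ and $0\le |v|\le M-1$; here $v$ is the terminal segment of $w$ and $u$ is its initial segment. This works because $T$ acts by prepending length-$M$ blocks on the left, or deleting them. For each fixed word $v$ with $|v|\le M-1$, I would let $\mathcal H_v\subseteq \operatorname{ran}P_N$ be the span of the $\xi_{uv}$ with $M\mid |u|$ and $|uv|\le N$.

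Since each word has exactly one such factorization, $\operatorname{ran}P_N=\bigoplus_v \mathcal H_v$ orthogonally. The number of summands is the number of words of length at most $M-1$, namely $1+d+\cdots+d^{M-1}$.

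The length constraint is where $N=MN'-1$ enters. Write $|u|=qM$. Then $qM+|v|\le MN'-1$ holds for every $|v|\in\{0,\dots,M-1\}$ exactly when $q\le N'-1$. So every $\mathcal H_v$ is spanned by the same family of ``$u$-parts'', independent of $v$: all $u$ made of at most $N'-1$ blocks of length $M$. These are in bijection with the words of at most $N'-1$ letters in the $\delta$-letter alphabet. I will need to match this with the paper's convention for $\Pi_{N'}$, so that $\Pi_{N'}$ cuts off at the same number of blocks. Checking that this indexing agrees is the one bookkeeping point where I expect a possible off-by-one, and I would resolve it by fixing the convention for $\Pi$ so that the dimensions match.

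Next, define $W_v:\mathcal H_v\to\operatorname{ran}\Pi_{N'}$ by $\xi_{uv}\mapsto \eta_{\tilde u}$. Here $\tilde u$ is $u$ read block-by-block as a word in $\delta$ letters via the fixed bijection, and $\eta$ denotes the standard basis of $\mathcal F_\delta$. $W_v$ is unitary because it maps an orthonormal basis bijectively onto one.

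It remains to check that $\mathcal H_v$ reduces $T_N$ and that $W_vT_N|_{\mathcal H_v}W_v^*=\widetilde T_{N'}$. For the creation part, $L^w\xi_{uv}=\xi_{wuv}$, which corresponds to $\Lambda^{j(w)}\eta_{\tilde u}$; compressing by $P_N$ on one side and $\Pi_{N'}$ on the other kills exactly the same terms. For the annihilation part, $L^{w*}\xi_{uv}=\xi_{u'v}$ if $u=wu'$, and it is $0$ if $u$ does not begin with $w$. In particular, when $u$ is empty we get $L^{w*}\xi_v=0$, because $|v|<M=|w|$. This matches $\Lambda^{j*}\eta_{\tilde u}$, including $\Lambda^{j*}$ vanishing on the vacuum. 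Both parts preserve the factorization with the same $v$, so $\mathcal H_v$ is invariant under $T$ and $T^*=T$, hence reducing for $T_N$. Summing over $v$ then gives the claimed unitary equivalence.
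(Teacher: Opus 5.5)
Your proposal is correct and is essentially the paper's own proof: the same orthogonal decomposition of the range of $P_N$ into reducing subspaces $\mathcal H_v$, indexed by tails $v$ with $|v|\le M-1$, and the same reading of the block-prefix $u$ as a word in $\delta$ letters; your explicit check of the creation and annihilation terms (including $\Lambda_j^*$ killing the vacuum because $|v|<M$) spells out what the paper calls straightforward.

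Your off-by-one worry is justified, and it should be resolved by correcting the index rather than by choosing a convention. With $\Pi_{N'}$ projecting onto words of at most $N'$ letters, as the statement says, each $\mathcal H_v$ has dimension $1+\delta+\cdots+\delta^{N'-1}$. So what is actually true is that $T_{MN'-1}$ is unitarily equivalent to $1+d+\cdots+d^{M-1}$ copies of $\widetilde T_{N'-1}$, or equivalently to copies of $\widetilde T_{N'}$ if one takes $N=M(N'+1)-1$. The paper's proof carries the same shift (its $\mathcal H_v$ uses $0\le n\le N'-1$), and the shift is harmless for the limiting-distribution application.
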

\begin{proof}
    Fix $N$ and $N^\prime$ as in the statement. For each word $v$ in $d$ letters, of length at most $M-1$, let $\mathcal H_v\subset \mathcal F^d$ be the subspace
    \[
    \mathcal H_v = \text{span}\{ \xi_{w_1w_2\cdots w_nv} : |w_i|=M, \, 0\leq n\leq N^\prime-1\}.
    \]
    The spaces $\{\mathcal H_v\}$ are mutually orthogonal, and reducing for $T_N$. The direct sum of the $\mathcal H_v$ over the words $v$ of length at most $M-1$ is equal to the range of the projection $P_{N}$. It is  then straightforward to check that the compression of $T_{N}$ to $\mathcal H_v$ is unitarily equivalent to $\widetilde{T}_{N^\prime}$. Since the number of subspaces $\mathcal H_v$ is equal to $1+d+\cdots +d^{M-1}$, the lemma follows. 
\end{proof}

\begin{theorem}\label{thm:general-case}
    Keeping the notation of the previous lemmas, we impose the normalization $\sum_{|w|=M}|a_w|^2=1$. Then the limiting eigenvalue distribution of the truncations $T_N$ of the multi-Toeplitz operator
    \[
    T=\sum_{|w|=M} a_wL_w+\overline{a_w}L^{w*}
    \]
acting in the Fock space $\mathcal F_d$ over $d$ letters, is equal to the limiting eigenvalue distribution of the truncations of $L_1+L_1^*$ acting in the Fock space over $d^M$ letters, namely
    \[
    \mu_T= \left(1-\frac{1}{d^{M}}\right)^2 \sum_{h=0}^\infty \left(\frac{1}{d^M}\right)^h\mu_h
    \]
with $\mu_h$ as in Theorem~\ref{thm:special-case}.
\end{theorem}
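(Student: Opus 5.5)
The plan is to combine Lemma~\ref{lem:long-words}, Lemma~\ref{lem:one-letter} and Theorem~\ref{thm:special-case}, and to use the existence of the limit from Theorem~\ref{ex}.

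\emph{Step 1: reduce to one letter.} By Lemma~\ref{lem:long-words}, for each $N'\geq 1$ and $N=MN'-1$, the truncation $T_N$ is unitarily equivalent to a direct sum of $1+d+\cdots+d^{M-1}$ copies of $\widetilde T_{N'}=\Pi_{N'}\widetilde T\Pi_{N'}$. Here
\[
\widetilde T=\sum_{j=1}^{\delta}\left(\alpha_j\Lambda_j+\overline{\alpha_j}\Lambda_j^*\right)
\]
acts in $\mathcal F_\delta$ with $\delta=d^M$. Since $\sum_j|\alpha_j|^2=\sum_w|a_w|^2=1$, Lemma~\ref{lem:one-letter} (applied in $\mathcal F_\delta$) gives a unitary $U$ commuting with every $\Pi_{N'}$ such that $\sum_j\alpha_j\Lambda_j=U\Lambda_1U^*$. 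Hence
\[
\widetilde T=U(\Lambda_1+\Lambda_1^*)U^*,
\qquad
\widetilde T_{N'}=U\,\Pi_{N'}(\Lambda_1+\Lambda_1^*)\Pi_{N'}\,U^*.
\]
So $\widetilde T_{N'}$ has the same eigenvalues, with multiplicities, as the truncation of $\Lambda_1+\Lambda_1^*$ in $\mathcal F_{\delta}$.

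\emph{Step 2: compare empirical measures.} First check the dimension count:
\[
(1+d+\cdots+d^{M-1})\,(1+\delta+\cdots+\delta^{N'-1}) = 1+d+\cdots+d^{MN'-1}=d_N,
\]
since every word of length at most $MN'-1$ factors uniquely as blocks of length $M$ followed by a tail of length less than $M$. Strictly, Lemma~\ref{lem:long-words} uses $n\le N'-1$ blocks, so the copies have dimension $1+\delta+\cdots+\delta^{N'-1}$, which is the truncation at block level $N'-1$. I need to be careful with this index convention. Either way, the empirical measure of $T_N$ equals the empirical measure $\widetilde\sigma$ of the corresponding truncation of $\Lambda_1+\Lambda_1^*$. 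This holds because a direct sum of $k$ identical copies has the same normalized eigenvalue distribution as a single copy.

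\emph{Step 3: pass to the limit.} By Theorem~\ref{ex}, the empirical measures of $T_N$ converge weak-* to $\mu_T$ along the full sequence $N$, hence also along the subsequence $N=MN'-1$. Along that subsequence they coincide with the empirical measures of truncations of $\Lambda_1+\Lambda_1^*$ in $\mathcal F_{d^M}$. By Theorem~\ref{thm:special-case} with $d$ replaced by $d^M\geq 2$, those converge to
\[
\left(1-d^{-M}\right)^2\sum_{h\ge0}d^{-Mh}\mu_h .
\]
Uniqueness of weak-* limits identifies $\mu_T$ with this measure.

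The main obstacle is the bookkeeping in Step 2. I have to verify that the reducing subspaces $\mathcal H_v$ of Lemma~\ref{lem:long-words} really match truncation levels in $\mathcal F_\delta$, and that every compression is the same truncation, independent of $|v|$. This needs the range condition that the word $w_1\cdots w_nv$ has length at most $N$, which must hold uniformly in $v$. I also need that $U$ from Lemma~\ref{lem:one-letter} commutes with the relevant $\Pi$. Once the indices line up, the rest is formal.
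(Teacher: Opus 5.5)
Your proposal is correct and is essentially the paper's argument: Lemma~\ref{lem:long-words} along $N=MN'-1$, then Lemma~\ref{lem:one-letter} in $\mathcal F_{d^M}$, then Theorem~\ref{thm:special-case}; where the paper compares moments, you equate normalized empirical measures directly and explicitly cite Theorem~\ref{ex} to pass from the subsequence to the full sequence, a step the paper uses only implicitly. Your index correction is also right: the blocks are truncations at level $N'-1$, and $(1+d+\cdots+d^{M-1})(1+d^M+\cdots+d^{M(N'-1)})=1+d+\cdots+d^{MN'-1}$ holds exactly, so the paper's ratio is identically $1$ and the off-by-one is harmless.
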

\begin{proof}
    By Lemma~\ref{lem:long-words}, $T_N$ is unitarily equivalent to a direct sum of $d_{M-1}=1+d+\cdots d^{M-1}$ copies of $\widetilde{T}_{N^\prime}$, with $N=MN^\prime-1$. The rank of the projection $P_N$ is \[d_{MN^\prime-1}=1+d+d^2+\cdots + d^{MN^\prime-1},\] and the rank of the projection $\Pi_{N^\prime}$ is \[(d^M)_{N-1}= 1+d^M +d^{2M} +\cdots + d^{M(N-1)}.\] For each $k\geq 1$, we obtain
    \begin{align*}
    \frac{1}{rank P_N}&Tr (P_NTP_N)^k= \frac{1}{1+d+\cdots + d^{MN^\prime-1}}Tr (P_NTP_N)^k \\
    &=   \frac{1+d+\cdots +d^{M-1}}{1+d+\cdots +d^{MN^\prime-1}} Tr (\Pi_{N^\prime}\widetilde{T}\Pi_{N^\prime})^k\\
    &=  \frac{1+d+\cdots +d^{M-1}}{1+d+\cdots +d^{MN^\prime-1}} \frac{1+d^M+\cdots +d^{M(N^\prime-1)})}{rank \Pi_{N^\prime}}Tr (\Pi_{N^\prime}\widetilde{T}\Pi_{N^\prime})^k\\
    \end{align*}
    But now
    \[
    \lim_{N^\prime\to\infty} \frac{1+d+\cdots +d^{M-1}}{1+d+\cdots +d^{MN^\prime-1}}\cdot(1+d^M+\cdots +d^{M(N^\prime-1)})=1
    \]
    so the two systems of truncations have the same limiting moments, and hence the same limiting eigenvalue distribution. That is, the limiting eigenvalue distribution of the original $T_N$ is equal to the limiting eigenvalue distribution of the truncations of the operator
    \[
    \widetilde{T}= \sum_{j=1}^\delta \alpha_j \Lambda_j +\overline{\alpha_j}\Lambda_j^*,
    \]
    a multi-Toeplitz operator acting in the Fock space over $\delta=d^M$ letters. But by Lemma~\ref{lem:one-letter}, the truncations of this operator are unitarily equivalent to the truncations of $L_1+L_1^*$ acting in the $d^M$-letter Fock space. The conclusion now follows from Theorem~\ref{thm:special-case}.\\
\end{proof}

\section{Szeg\H{o} Entropy}
In this section we use our results to compute the entropy 
\[
E(K_T)=\int_{\mathbb R} \log x\, d\mu_T(x)
\]
in a special case. Under the assumption that $T$ is positive definite, $\mu_T$ (as well as the empirical spectral distribution $\sigma_N$, for each $N$) is supported in an interval $[a,b]$ for some $a>0$, so the logarithm is continuous on this interval. It is not immediate that the above formula remains valid when the support of the limiting measure includes $0$, but this will in fact be true in the special case $T=2+L_1+L_1^*$, as we now show. 

    Consider the positive multi-Toeplitz operator $T = 2+ L_1+ L_1^*$
    acting on the Fock space $\mathcal{F}_2$.
The Szeg\H{o} limiting measure of $T$ is given by:
\[
     \mu_T= \left(1-\frac12\right)^2 \sum_{h=0}^\infty \left(\frac12\right)^h \widetilde{\mu_h}
\]
as in Theorem~\ref{thm:special-case}, where now $\widetilde{\mu_h}$ is the translate of $\mu_h$ by $2$ units to the right:
\[
\widetilde{\mu_h} (E) := \mu_h(-2+E).
\]
Let us first fix $h$ and compute $\int \log x\, d\mu_h(x)$. By definition this quantity is 
\[
\sum_{j=1}^{h+1} \log\left( 2+\cos \frac{\pi j}{h+2}\right).
\]
Making the change of variable $j\to h+2-j$ and using the symmetry $\cos \theta = -\cos (\pi-\theta)$, this is seen to be equal to
\begin{align*}
\frac12 \sum_{j =1}^{h+1}  \log \left( 2+ 2 \cos{\frac{ \pi j}{h+2}}\right)+ \log\left(2- 2 \cos{\frac{ \pi j}{h+2}}   \right) &= \frac12 \sum_{j=1}^{h+1} \log\left( 4 \sin^2 \frac{ \pi j}{h+2}\right)\\
&= \log \left(2^{h+1}\prod_{j=1}^{h+1} \sin\frac{\pi j}{h+2}\right)\\
&= \log(h+2)
\end{align*}
where we have used the identity \cite[Formula 1.392(1)]{gr-tables}
\[
\prod_{j=1}^{h+1} \sin \frac{\pi j}{ h+2} = \frac{h+2}{2^{h+1} }.
\]
(As an alternative proof, one could verify by induction that $\det (2I+X_h) = h+2$.)
Using (\ref{eqn:esd-N}), we conclude that 
\[
\int_{\mathbb R} \log x \, d\sigma_N = \frac{d-1}{d^{N+1}-1} \log (N+2) +\left( \frac{d-1}{d}\right)^2 \frac{1}{1-d^{-(N+1)}} \sum_{h=1}^N \frac{1}{d^h}\log (h+2)
\]
It is straightforward to take the limit as $N\to \infty$ to obtain
\[
E(K_T)=\int_{\mathbb R} \log x\, d\mu_T = \left(\frac{d-1}{d}\right)^2\sum_{h=1}^\infty d^{-h}\log(h+2).
\]
We numerically computed the first 20 partial sums (see Figure~\ref{fig:placeholder}) in the case $d=2$ to obtain the estimate
\[
E(K_T) \approx {0.507}. 
\]

\begin{figure}[H]
\centering
    \includegraphics[width=0.7\linewidth]{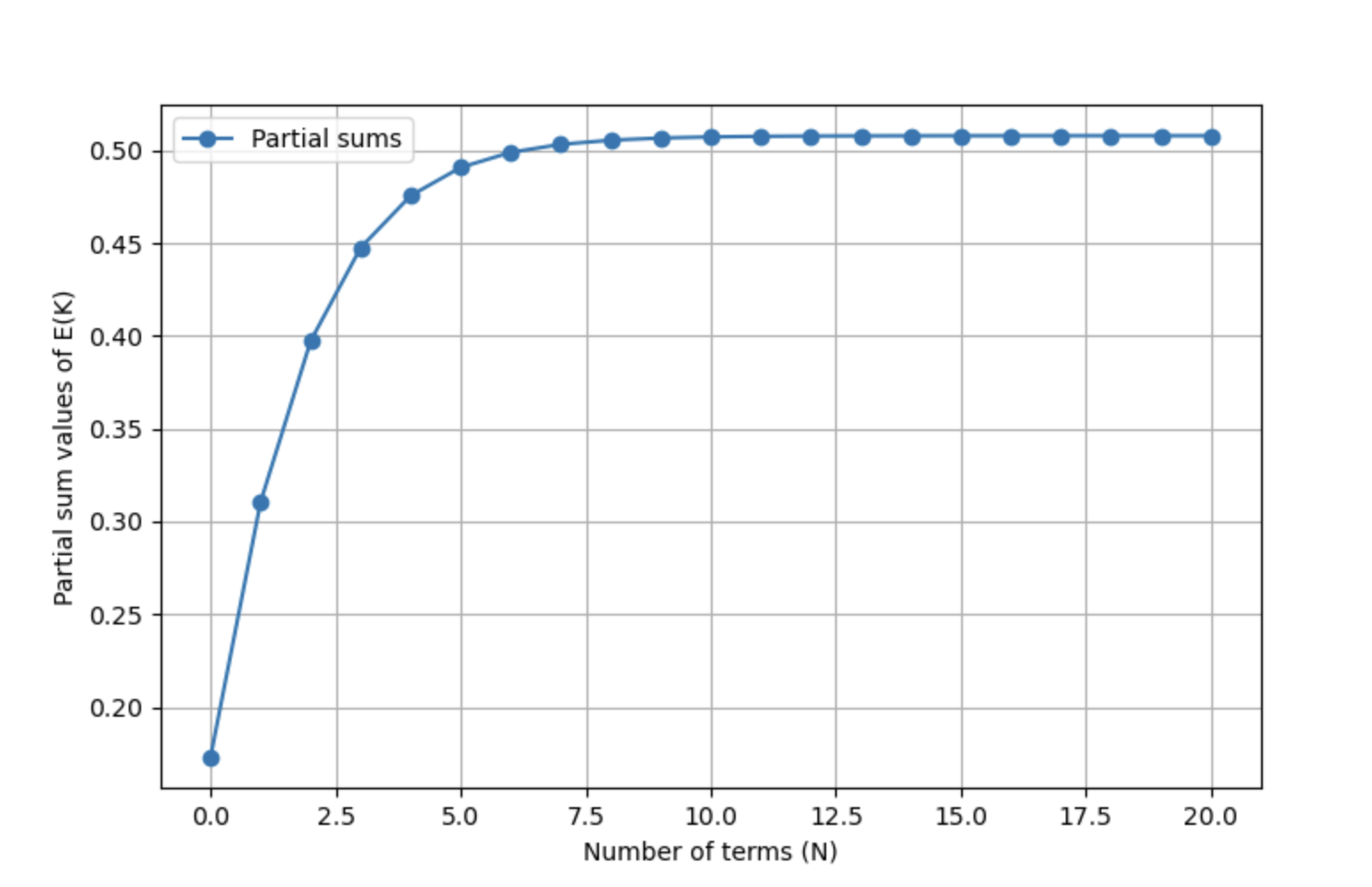}
    \caption{First 20 partial sums of $E(K_T)$ when $d=2$}
    \label{fig:placeholder}
\end{figure}

Using Theorem~\ref{thm:general-case}, the above calculation can be straightforwardly generalized to give the entropy $E(K_T)$ for any positive Toeplitz operator of the form
\[
T= c_0I +\sum_{|w|=N}c_w L^w +\overline{c_w}L^{w*},
\]
the interested reader may easily supply the details. 

\section{Random tridiagonal matrices}
It is remarkable that the limiting eigenvalue distribution $\mu_T$ that we have obtained in the case of $L_1+L_1^*$ coincides with the limiting eigenvalue distribution of a simple random tridiagonal matrix model. In this section we describe the random tridiagonal model and give a heuristic explanation for the coincidence. 

Random tridiagonal matrices arise naturally in a variety of contexts, including models of stochastic processes, hierarchical dynamics, and transfer operators with nearest-neighbor interactions; see  \cite{MR3436667} for connections of the particular model we will consider to mathematical physics. See \cite{popescu-2025} for a treatment of fairly general random tridiagonal models; we will import one relatively simple special case of the results from \cite{popescu-2025}.

Consider a selfadjoint tridiagonal $N\times N$ random matrix
\[ 
A_N = \begin{bmatrix}
0 & a_{1} & 0 &  0 & \cdots \\
a_{1} & 0 & a_{2} & 0 & \cdots \\
0 & a_{2} & 0 &  a_{3} & \cdots \\
0& 0 & a_{3} &  0 & \cdots \\
\vdots & \ddots &  \ddots & \ddots & \ddots
\end{bmatrix},
\]
where the entries $a_i$ are independent and $p$-Bernoulli distributed, i.e,

  \[ a_i=
    \begin{cases}
        1 & \text{with probability $p$} \\
        0 & \text{with probability $1-p$} \\
    \end{cases}
\]
\\

Define the empirical eigenvalue distribution of $A_N$ as:
\begin{align*}
    \rho_N = \frac{1}{N} \sum_{j=1}^N \delta_{\lambda_j(A_N)}
\end{align*}
where $\lambda_j(A_N)$ are the eigenvalues of $A_N$. For each $N$ the $\rho_N$ form an ensemble of random probability measures supported in $[-2,2]$. \\

We have the following theorem from \cite[Proposition 4.2]{popescu-2025}:
\begin{proposition}
The Bernoulli tridiagonal matrices $A_N$ have limiting eigenvalue distribution
  \begin{align*}
\mu=   (1-p)^2\sum_{h=0}^\infty  p^h \mu_h
  \end{align*}
(that is, the $\rho_N$ converge weak-* almost surely to $\mu$).   
\end{proposition}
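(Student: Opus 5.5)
The plan is to reduce the spectral problem for $A_N$ to counting the lengths of the runs of consecutive ones in the Bernoulli sequence $a_1,\dots,a_{N-1}$. This mirrors the chain decomposition used in the proof of Theorem~\ref{thm:special-case}.

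\textbf{Step 1: block decomposition.} Every zero entry $a_i=0$ disconnects the tridiagonal matrix. Hence $A_N$ is a direct sum of blocks $X_h$: a maximal run of exactly $h$ consecutive ones among $a_1,\dots,a_{N-1}$ produces one block $X_h$ of size $h+1$. Isolated indices give $X_0=[0]$, whose spectral measure is $\mu_0=\delta_0$. Therefore
\[
N\rho_N=\sum_{h\geq 0} R_h^{(N)}\,\mu_h ,
\]
where $R_h^{(N)}$ is the number of blocks of size $h+1$. Since $\sum_h (h+1)R_h^{(N)}=N$, this is a probability measure after dividing by $N$.

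\textbf{Step 2: asymptotics of block counts.} A block of size $h+1$ beginning at site $i$ corresponds to the pattern $a_{i-1}=0$, $a_i=\cdots=a_{i+h-1}=1$, $a_{i+h}=0$, with the obvious modification at the boundary. This pattern has probability $(1-p)^2p^h$. The indicator variables $\chi_i$ of this pattern are $(h+2)$-dependent, so the strong law of large numbers for $m$-dependent sequences applies. (Alternatively, a fourth-moment bound plus Borel--Cantelli works.) It gives, almost surely for each fixed $h$,
\[
R_h^{(N)}/N\to(1-p)^2p^h .
\]
There are countably many $h$, so this holds simultaneously for all $h$ on a single full-measure event.

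\textbf{Step 3: weak convergence and tail control.} This is the main obstacle, because infinitely many $h$ must be handled uniformly. Fix a continuous $f$ on $[-2,2]$ and $H$. Split
\[
\int f\,d\rho_N=\sum_{h\leq H}\frac{R_h^{(N)}}{N}\int f\,d\mu_h+E_{N,H}.
\]
The error satisfies
\[
|E_{N,H}|\leq \|f\|_\infty\Bigl(1-\sum_{h\leq H}(h+1)R_h^{(N)}/N\Bigr).
\]
By Step 2, this converges almost surely to
\[
\|f\|_\infty\Bigl(1-(1-p)^2\sum_{h\leq H}(h+1)p^h\Bigr).
\]
That quantity tends to $0$ as $H\to\infty$, using the identity $(1-p)^2\sum_h(h+1)p^h=1$ noted after Theorem~\ref{thm:special-case}.

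The identity $\sum_h (h+1)R_h^{(N)}=N$ is the key point: it turns tail control into control of finitely many terms and avoids any uniform-in-$h$ law of large numbers. Letting $N\to\infty$ and then $H\to\infty$ gives
\[
\int f\,d\rho_N\to\int f\,d\mu \quad \text{almost surely.}
\]
Applying this to a countable dense set of $f$ in $C[-2,2]$ yields almost sure weak-* convergence.
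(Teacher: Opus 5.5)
Your proof is correct. The paper gives no argument of its own here: the proposition is imported from \cite[Proposition 4.2]{popescu-2025}, which treats general random tridiagonal models, where the matrix need not decompose into blocks. Yours is instead a self-contained proof specific to Bernoulli entries, and it turns the percolation heuristic of the paper's last section into a rigorous argument. The deterministic chain count $(d-1)d^{N-h-1}$ from the proof of Theorem~\ref{thm:special-case} is replaced by the almost-sure run frequency $R_h^{(N)}/N\to(1-p)^2p^h$. The mass identity $\sum_h(h+1)R_h^{(N)}=N$, together with $(1-p)^2\sum_h(h+1)p^h=1$, then does the job that total-variation summability does there.

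Your Step 3 is in fact Scheff\'e's lemma in disguise. On your full-measure event you get $\sum_h(h+1)\bigl|R_h^{(N)}/N-(1-p)^2p^h\bigr|\to 0$, hence $\rho_N\to\mu$ in total variation. This mirrors the conclusion of Theorem~\ref{thm:special-case}, holds for all test functions at once, and makes the countable dense family of $f$ unnecessary.

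Two small caveats:
\begin{enumerate}
\item The normalization identity requires $p<1$. At $p=1$ the stated formula gives the zero measure, while the true limit is the arcsine law.
\item The $m$-dependent strong law presumes the $A_N$ are nested truncations of a single sequence $(a_i)$. If a fresh matrix is sampled for each $N$, it is your fourth-moment plus Borel--Cantelli alternative that delivers the almost-sure statement.
\end{enumerate}
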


These $\mu_h$ are the same as those appearing in Theorem~\ref{thm:special-case}. So, we see 
that the limiting eigenvalue distribution of the truncated  multi-Toeplitz operators
\[
   T_N=P_N(L_1+ L_1^*)P_N
\]
coincides with the limiting distribution of the Bernoulli tridiagonal model, when the probability is $p=\frac1d$. This agreement can be understood heuristically by the following argument; we work in the case $d=2$. 

Consider $T=L_1+L_1^*$ acting in the Fock space $\mathcal{F}_2$. The multi-Toeplitz matrix (\ref{eqn:big-toeplitz}) of $T$ has a sparse structure: as observed in the proof of Theorem~\ref{thm:special-case}, this matrix may be viewed as the incidence matrix of a graph whose nodes are the nodes of a rooted binary tree. Only the "left" edges between parent and child nodes of the tree are kept (see Figure~\ref{fig:tree}). This structure may be viewed as a deterministic percolation model on the binary tree. 

Because only the left edges	are kept, the graph decomposes into linear chains. After four generations, for example, one finds that one-eighth of the clusters have length $4$, one-eighth have length $3$,  one-fourth have length $2$, and one half have length $1$. In general, clusters of size $k$ occur with frequency asymptotically proportional to $2^{-k}$. The limiting graph therefore consists of linear components whose size distribution is geometric. (This is the intuition that underlies the proof of Theorem~\ref{thm:special-case}.)

Now compare this with the random tridiagonal model described above: the model produces a random percolation model on $\mathbb{N}$. In this setting as well, the (random) graph decomposes into linear clusters, and the probability of observing a chain of length $k$ is proportional to $2^{-k}$.

\begin{figure}[h]
\begin{center}

\begin{tikzpicture}[scale=1, every node/.style={circle, draw, fill=white!20, minimum size=5mm}]

\def\n{10}

\foreach \i in {1,...,\n} {
    \node (\i) at (\i,0) {\i};
}

\foreach \i/\j in {1/2, 3/4, 4/5, 6/7, 8/9, 9/10} {
    \draw [red, very thick](\i) -- (\j);
}

\end{tikzpicture}

\end{center}
\caption{Random percolation process determined by the random tridiagonal matrices $A_N$.}
\end{figure}

Thus, in both the deterministic multi-Toeplitz case and the random tridiagonal model, we get geometrically distributed linear clusters, so one would expect the limiting eigenvalue distributions to reflect these common statistics, as indeed we have seen. However, this viewpoint does not immediately extend to more complicated symbols or to operators involving multiple creation and annihilation terms, where the underlying graph structure is no longer composed of simple linear clusters and the combinatorics become substantially more intricate. We leave this problem for future investigation.

\printbibliography

\end{document}